\newcommand{\abs}[1]{\left\lvert#1\right\rvert}
\newcommand{\norm}[1]{\left\lVert#1\right\rVert}
\def\blfootnote{\xdef\@thefnmark{}\@footnotetext}
\newtheorem{thm}{Theorem}[section]
\newtheorem{cor}{Corollary}[section]
\newtheorem{lem}{Lemma}[section]
\theoremstyle{remark}
\newtheorem{rem}{Remark}[section]
\theoremstyle{definition}
\newtheorem{defn}{Definition}[section]
\newcommand{\C}{{\mathbb C}}
\newcommand{\N}{{\mathbb N}}
\newcommand{\R}{{\mathbb R}}
\title{Establishing strong 1-boundedness via non-microstates free entropy techniques}
\author{Benjamin Major, Dimitri Shlyakhtenko}
\date{}
\begin{document}

\maketitle
\begin{abstract}
    We show that, for many choices of finite tuples of generators $\mathbf{X}=(x_1,\dots,x_d)$ of a tracial von Neumann algebra $(M,\tau)$ satisfying certain decomposition properties (non-primeness, possessing a Cartan subalgebra, or property $\Gamma$), one can find a diffuse, hyperfinite subalgebra $N\subseteq W^*(\mathbf{X})^\omega$ (often in $W^*(\mathbf{X})$ itself), such that $$W^*(N,\mathbf{X}+\sqrt{t}\mathbf{S})=W^*(N,\mathbf{X},\mathbf{S})$$ for all $t>0$. (Here $\mathbf{S}$ is a free semicircular family, free from $\{\mathbf{X}\cup N\}$). This gives a short 'non-microstates' proof of strong 1-boundedness for such algebras.
\end{abstract}
\section{Introduction}

Voiculescu's free entropy dimension \cite{Voi96} is defined for $n$-tuples of self-adjoint elements in a tracial von Neumann algebra and is related to the short-term behavior of free entropy under semicircular perturbations. Since the free entropy dimension of a generating set of a diffuse amenable von Neumann algebra is always $1$ (cf. \cite{JungHyperfinite}), it makes sense to consider a kind of relative quantity (see \cite{Shl02,Jun07,JP24}). A strengthened version of the vanishing of such a relative quantity is called strong 1-boundedness and was defined 
by Jung \cite{Jun07} for finite tuples of self-adjoint elements in a tracial von Neumann algebra (see also \cite{Ge96,HS07,Hay18}). 
Being strongly 1-bounded is an invariant of the von Neumann algebra that the tuple generates, and is implied by that algebra possessing a Cartan subalgebra, property $\Gamma$, tensor product decomposition, and most instances of Property (T) (\cite{Voi96,Jun07,JS07,Hay18,HJKE25}).  On the other hand, Connes-embeddable von Neumann algebras that are free products (e.g. free group factors) are not strongly $1$-bounded. 

Voiculescu also introduced  a non-microstates free entropy $\chi^*$ which is defined via the free Fisher information $\Phi^*$ (\cite{Voi98-1}). There are conditions on $\chi^*$ similar to strong 1-boundedness, which can be stated in terms of $\Phi^*$ (see e.g. \cite{Shl21} for some applications to von Neumann algebras of groups).  In this paper we will be considering a version of these conditions relative to a diffuse abelian sub-algebra. 

Suppose  that $\mathbf{X}$ is a self-adjoint $d$-tuple in a tracial von Neumann algebra, $N$ is some fixed von Neumann subalgebra, and $\mathbf{S}$ is a free $(0,1)$-semicircular $d$-tuple, free from $N\cup\{\mathbf{X}\}$. Let $\Phi^*(\cdots :N)$ denote Voiculescu's free Fisher relative to $N$ (\cite{Voi98-1}).  Then 
\begin{equation}\label{eq1}
    \Phi^*(\mathbf{X}+\sqrt{\epsilon}\mathbf{S}:N)\leq\frac{d}{\epsilon}\quad\forall\,\epsilon>0.
\end{equation} with equality iff $N\vee W^*(\mathbf{X}+\sqrt{\epsilon}\mathbf{S})=N\vee W^*(\mathbf{X},\mathbf{S})$,  i.e., iff $\mathbf{X}\subset N\vee W^*(\mathbf{X}+\sqrt{\epsilon}\mathbf{S})$.

Thus if e.g. $\mathbf{X}=\mathbf{0}$ (or more generally $\mathbf{X}\subset N$), equality in (\ref{eq1}) holds trivially for all $\epsilon>0$. 

Our main result is that this phenomenon also occurs when $\mathbf{X}$ is one of many choices of generators for the key examples of strongly 1-bounded algebras. We show that in these cases one can find a diffuse abelian subalgebra $N$ of an ultrapower $W^*(\mathbf{X})^\omega$ (an often even of $W^*(\mathbf{X})$ itself) such that $\mathbf{X}\subset N\vee W^*(\mathbf{X}+\sqrt{\epsilon}\mathbf{S})$ for all $\epsilon>0$. The key ingredient is the consideration of $N\vee W^*(\mathbf{X}+\sqrt{\epsilon}\mathbf{S})$ as an $N,N$ bimodule, and the identification of $\mathbf{X}$ as the ``non-coarse part'' of the vector associated to $\mathbf{X}+\sqrt{\epsilon}\mathbf{S}$.
We then show that such an estimate on the free Fisher information (we can loosen the conditions to require only that $N$ be diffuse and hyperfinite) implies the strong 1-boundedness of $W^*(\mathbf{X})$. Indeed, we estimate the relative non-microstates free entropy $\chi^*(\cdots :N )$, and appeal to the results of \cite{JP24} to establish strong enough bounds on the microstates free entropy $\chi$. This provides a quick proof that $W^*(\mathbf{X})$ is strongly 1-bounded, indeed has non-positive 1-bounded entropy. 

\textbf{Acknowledgments:}
This research was sponsored in part by the Army Research Office and was accomplished under Grant
Number W911NF-25-1-0075. The views and conclusions contained in this document are those of the authors and
should not be interpreted as representing the official policies, either expressed or implied, of the Army Research
Office or the U.S. Government. The U.S. Government is authorized to reproduce and distribute reprints for
Government purposes notwithstanding any copyright notation herein.
Research was also supported in part by NSF grant DMS-2348633.

\section{Main Results}
This section presents self-contained arguments for our main theorems, and we defer the free probabilistic preliminaries to the latter half of the paper.

Throughout, we fix $\mathbf{X}=(x_1,\dots,x_d)$ a self-adjoint tuple of random variables in a tracial von Neumann algebra $(M,\tau)$, and assume $W^*(\mathbf{X})=M$. Let further $\mathbf{S}=(s_1,\dots,s_d)$ be a free semicircular family, free from $W^*(\mathbf{X})$. Also, $N$ will always denote a diffuse hyperfinite von Neumann algebra, most often of $M$, but we will on occasion take it to be in $M^\omega$, for a free ultrafilter $\omega\in\beta\N\setminus\N$. We denote also $\widetilde{M}=W^*(\mathbf{X},\mathbf{S})$, and $M_t:=W^*(N,\mathbf{X}+\sqrt{t}\mathbf{S})\subseteq \widetilde{M}^\omega$ for $t\geq0$. It will be clear from context when $N$ is in the ultrapower as opposed to $M$ itself.

Denote by $N^\text{op}$ the opposite algebra of $N$, and by $N\odot N^{\text{op}}$ the algebraic tensor product of $N$ and $N^\text{op}$.  For $\beta=\sum_{i=1}^na_i\odot b_i^{\text{op}}\in N\odot N^{\text{op}}$ and $y\in M$, we write $\beta\#y$ to mean $\sum_{i=1}^n a_iyb_i$.
\begin{thm}
    Suppose that one can find a free ultrafilter $\omega$ on a countably infinite set and $N\subseteq W^*(\mathbf{X})^\omega$ diffuse, separable, and abelian, such that $L^2(N\mathbf{X}N)\perp L^2(N)\otimes L^2(N)$ as $N-N$ bimodules. Then \begin{equation} \label{maineq}
        M_t=N\vee W^*(\mathbf{X}+\sqrt{t}\mathbf{S})=W^*(N,\mathbf{X},\mathbf{S})=N\vee \widetilde{M}
    \end{equation} for all $t>0$.  Equivalently, 
    \begin{equation}\label{eq:maineq1}
        \mathbf{X}\subset W^*(N,\mathbf{X}+\sqrt{t}\mathbf{S}).
    \end{equation}
\end{thm}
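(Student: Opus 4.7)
The plan is to identify $\mathbf{X}$ as the non-coarse part and $\sqrt{t}\,\mathbf{S}$ as the coarse part of $\mathbf{Y}:=\mathbf{X}+\sqrt{t}\,\mathbf{S}$ inside the ambient $N$-$N$ bimodule $H:=L^2(W^*(N,\mathbf{X},\mathbf{S}))$, and then to argue that both of these parts automatically lie in $L^2(M_t)$ because $\mathbf{Y}$ does.

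First I would decompose $H = H_c \oplus H_{nc}$, where $H_c$ is the maximal coarse $N$-$N$ sub-bimodule (a direct sum of copies of $L^2(N)\otimes L^2(N)$) and $H_{nc}$ is its orthogonal complement, which by definition admits no non-zero coarse sub-bimodule. The hypothesis $L^2(N\mathbf{X}N)\perp L^2(N)\otimes L^2(N)$ means $L^2(N\mathbf{X}N)$ contains no non-zero coarse sub-bimodule, so $L^2(N\mathbf{X}N)\subseteq H_{nc}$ and in particular $\mathbf{X}\subset H_{nc}$. Freeness of $\mathbf{S}$ from $N$ together with the standard computation $\tau((a s_i b)^*(c s_i d))=\tau(a^*c)\tau(b^*d)$ produces an isometric $N$-$N$ bimodular map $L^2(N)\otimes L^2(N)\to H$ sending $a\otimes b\mapsto as_i b$, so $L^2(N\mathbf{S}N)\subseteq H_c$ and $\mathbf{S}\subset H_c$. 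Consequently, under this decomposition, the vector $\mathbf{Y}\in L^2(M_t)\subseteq H$ splits as $P_{H_c}\mathbf{Y}=\sqrt{t}\,\mathbf{S}$ and $P_{H_{nc}}\mathbf{Y}=\mathbf{X}$.

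The second step is to observe that since $N\subseteq M_t$, the subspace $L^2(M_t)$ is itself an $N$-$N$ sub-bimodule of $H$, and that its intrinsic coarse/non-coarse decomposition is the restriction of that of $H$: any coarse sub-bimodule of $L^2(M_t)$ is a coarse sub-bimodule of $H$ and hence lies in $H_c$, while $L^2(M_t)\cap H_c$ is visibly coarse, so the maximal coarse sub-bimodule of $L^2(M_t)$ equals $L^2(M_t)\cap H_c$. In particular, $L^2(M_t)$ is invariant under $P_{H_c}$ and $P_{H_{nc}}$. Applying these projections to $\mathbf{Y}\in L^2(M_t)$ puts $\sqrt{t}\,\mathbf{S}$ and $\mathbf{X}$ in $L^2(M_t)$, and since each coordinate is a bounded element of $\widetilde{M}^\omega$, each lies in $M_t$ itself. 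This yields $W^*(N,\mathbf{X},\mathbf{S})\subseteq M_t$; the reverse inclusion is immediate from $\mathbf{Y},N\subseteq W^*(N,\mathbf{X},\mathbf{S})$, giving \eqref{maineq} and the equivalent form \eqref{eq:maineq1}. Conceptually, this is precisely the statement that equality is attained in the Fisher information bound $\Phi^*(\mathbf{Y}:N)\leq d/t$ of \eqref{eq1}.

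The main obstacle is the compatibility step: that the intrinsic coarse part of the $N$-$N$ bimodule $L^2(M_t)$ coincides with the part inherited from $H$. This rests on a correct intrinsic characterization of ``coarse'' — as the maximal sub-bimodule admitting an isometric $N$-$N$ bimodular embedding of copies of $L^2(N)\otimes L^2(N)$ — so that the decomposition is both canonical and compatible with sub-bimodule inclusions. The remaining technicalities (keeping track of the inclusions $N\subset M^\omega$, $\widetilde M\subset \widetilde M^\omega$, and $M_t = N\vee W^*(\mathbf{X}+\sqrt{t}\mathbf{S})\subset \widetilde M^\omega$, and the passage from a bounded $L^2$-vector to an element of the underlying von Neumann algebra) are routine.
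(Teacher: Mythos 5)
Your proposal is correct and is essentially the approach taken in the paper: both identify $\mathbf{X}$ as the ``singular'' (non-coarse) part and $\sqrt{t}\,\mathbf{S}$ as the ``coarse'' part of $\mathbf{X}+\sqrt{t}\,\mathbf{S}$ with respect to the $N$-$N$ bimodule structure, and then use the $N$-$N$ bimodularity of the expectation onto $M_t$ to peel them apart. The only real difference is mechanical: the paper works with an explicit approximating net $\alpha_\lambda\in N\odot N^{\mathrm{op}}$ (finite sums of projection tensors) satisfying $\alpha_\lambda\#s_i\to s_i$ and $\alpha_\lambda\#x_i\to 0$, and pushes $E_t$ through the $\alpha_\lambda\#(\cdot)$ operation directly, which sidesteps the ``compatibility'' step; you instead invoke the exact orthogonal projection $P_{H_c}$ and must justify that $L^2(M_t)$ is reducing for it. That compatibility claim is true, but the cleanest way to see it is again via the bimodularity of $E_t$: since $E_t$ is a bimodular contraction, $\nu_{E_t\xi}\leq\nu_\xi$, so $E_t$ preserves both the absolutely-continuous and singular parts, hence commutes with $P_{H_c}$ — in other words, your compatibility lemma and the paper's in-line commutation of $E_t$ with $\alpha_\lambda\#(\cdot)$ are the same fact. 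One small terminological caution: the maximal ``coarse'' sub-bimodule $H_c$ should be read as the closure of the vectors whose associated measure on $\mathcal X\times\mathcal X$ is absolutely continuous with respect to $\mu\otimes\mu$, rather than literally a multiple of $L^2(N)\otimes L^2(N)$; with that reading, the Lebesgue decomposition gives exactly the canonicity and orthogonality you need.
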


\begin{proof}
Consider $N\vee \widetilde{M}$ with its canonical trace, which we also denote by $\tau$. There is an atomless standard probability space $(\mathcal{X},\mu)$ such that $(L^\infty(\mathcal{X},\mu),\int(\cdot)\,d\mu)\cong (W^*(\mathbf{X}),\tau)$ (here we use the separability assumption on $N$).

By a classical construction (see e.g. \cite{Voi96}, Section 7), each $\xi\in L^2(P,\tau)$ determines a measure $\nu_\xi$ on $\mathcal{X}$. For each $s_i$, we have $\nu_{s_i}=\mu\otimes\mu$, while the condition $L^2(N\mathbf{X}N)\perp L^2(N)\otimes L^2(N)$ is equivalent to $\nu_{x_i}\perp\mu\otimes\mu$ (i.e. the measures are mutually singular).

In this case, there is a net $\{\alpha_\lambda\}_{\lambda\in\Lambda}\in N\odot N^{\text{op}}$, where $\alpha_\lambda=\sum_{n=1}^{N(\lambda)}p^{(1)}_n\otimes p_n^{(2)}$, with each $p_n^{(i)}$ a projection, so that $\norm{\alpha_\lambda\# s_i-s_i}_2\to0$ and $\norm{\alpha_\lambda\# x_i}_2\to0$ for each $i$ (see again \cite{Voi96}, or \cite{Hay18}). Denote by $E_t:P\to M_t$ the trace-preserving conditional expectation. Recall that it is $N-N$ bimodular. So, we have $$\sqrt{t}s_i=\lim_\lambda\alpha_\lambda\#(x_i+\sqrt{t}s_i)=\lim_\lambda\alpha_\lambda\# E_t[x_i+\sqrt{t}s_i]=\lim_\lambda E_t[\alpha_\lambda\#(x_i+\sqrt{t}s_i)]=\sqrt{t}E_t(s_i).$$
Hence $E_t(s_i)=s_i$, $1\leq i\leq d$, and so $\mathbf{S}\subset M_t$.  Since $\mathbf{X}+\sqrt{t}\mathbf{S}\subset M_t$, we get \eqref{eq:maineq1}. Thus  $N\vee \widetilde{M} \supset M_t \supset N\vee \widetilde{M}$ giving us \eqref{maineq}.
\end{proof}
\begin{rem}
    The ultrapower played no role in the argument. We include it in the statement of the theorem to emphasize that its conclusion remains valid when $N$ is in an ultrapower.
\end{rem}

\begin{cor}
Let $(M,\tau)$ be a finitely-generated diffuse tracial von Neumann algebra. If (a) $M$
has a Cartan subalgebra, or
    if (b) $M$ has  property $\Gamma$ (in particular, if $M$ has diffuse center or is hyperfinite), then there is a diffuse abelian $N$ as in Theorem 2.1 such that (\ref{maineq}) holds for any finite set of generators $\mathbf{X}$ for $M$. 
\end{cor}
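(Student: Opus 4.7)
The overall strategy is, in each case, to exhibit a diffuse separable abelian subalgebra $N$ (in $M$ when possible and in $M^\omega$ otherwise) for which the bimodule hypothesis of Theorem 2.1 holds. I would use the equivalence, already established in the proof of Theorem 2.1, that $L^2(N\mathbf{X}N)\perp L^2(N)\otimes L^2(N)$ iff for each $i$ the Voiculescu coupling measure $\nu_{x_i}$ on $\mathcal{X}\times\mathcal{X}$ is singular with respect to $\mu\otimes\mu$, where $N\cong L^\infty(\mathcal{X},\mu)$. It therefore suffices to exhibit, for each generator $x_i$, a Borel $\mu\otimes\mu$-null set carrying $\nu_{x_i}$. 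Note that the construction of $N$ will depend only on $M$, so the same $N$ will serve every finite generating tuple.

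For case (a), take $N = A$, the Cartan subalgebra, identified with $L^\infty(\mathcal{X},\mu)$. By the Feldman-Moore realization, $L^2(M)$ as an $A$-$A$ bimodule is isomorphic to $L^2(R,\mu_R)$, where $R\subset\mathcal{X}\times\mathcal{X}$ is the countable Borel equivalence relation associated to the normalizing groupoid (a countable union of graphs of measure-preserving Borel partial isomorphisms of $\mathcal{X}$), and $\mu_R$ is the $\mu$-weighted counting measure on fibers. In particular $R$ is $\mu\otimes\mu$-null, and every $\nu_\xi$ for $\xi\in L^2(M)$ is absolutely continuous with respect to $\mu_R$, hence singular to $\mu\otimes\mu$. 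This gives the required singularity of each $\nu_{x_i}$.

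For case (b), I would produce a diffuse separable abelian $N\subseteq M'\cap M^\omega$. When $Z(M)$ is diffuse, take $N$ to be a diffuse separable abelian subalgebra of $Z(M)\subseteq M'\cap M$. When $M$ is hyperfinite and diffuse, amenability yields such an $N$ in $M'\cap M^\omega$ (via the central direct-integral decomposition, reducing either to the diffuse-center case or to the classical McDuff/Dixmier construction in $\mathcal{R}^\omega$). For general property $\Gamma$, $M'\cap M^\omega$ contains a diffuse abelian essentially by definition. With such $N$ fixed, for $\xi\in M$ and $f,g\in N$ the asymptotic commutation of $N$ with $M$ together with the commutativity of $N$ yields
\[
\langle f\xi g,\xi\rangle_{M^\omega} = \tau_\omega(\xi^* f \xi g) = \tau_\omega(fg\,\xi^*\xi) = \int_{\mathcal{X}} (fg)(z)\, E_N(\xi^*\xi)(z)\, d\mu(z),
\]
where $E_N\colon M^\omega\to N$ is the trace-preserving conditional expectation. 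Thus $\nu_{x_i}$ is the pushforward of $E_N(x_i^* x_i)\,d\mu$ under the diagonal embedding $z\mapsto(z,z)$, and the diagonal is $\mu\otimes\mu$-null because $\mu$ is atomless.

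The main obstacle is arranging the diffuse abelian $N\subseteq M'\cap M^\omega$ under the various manifestations of property $\Gamma$ allowed in (b); this is standard in each case (central subalgebra when $Z(M)$ is diffuse, the McDuff construction for hyperfinite factors, and the definition of $\Gamma$ in general). Once $N$ is selected, the singular-measure verifications above immediately give the bimodule hypothesis of Theorem 2.1, and the corollary follows.
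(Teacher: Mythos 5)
Your proof is correct and follows essentially the same strategy as the paper: in case (a) take $N$ to be the Cartan subalgebra, in case (b) take a diffuse separable abelian $N\subseteq M'\cap M^\omega$ furnished by property $\Gamma$, and then verify the bimodule hypothesis of Theorem 2.1. The paper is more terse, invoking $M=\mathcal{N}_M(N)''$ (resp.\ $M\le\mathcal{N}_{M^\omega}(N)''$) and Voiculescu's notion of singularity directly, while you unpack the same singularity via the Feldman--Moore realization in (a) and an explicit diagonal-supported coupling measure in (b); the underlying ideas coincide.
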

\begin{proof}

    a): Let $N\leq M$ be a Cartan subalgebra, so that $M=\mathcal{N}_M(N)''$. So, any s.a. tuple of generators generates a bimodule disjoint from $L^2(N)\otimes L^2(N)$, i.e. is singular with respect to $N$ in the sense of \cite{Voi93}, and this means the conditions for (\ref{maineq}) to hold are satisfied.
    
    b): In this case, there is an $\omega\in\beta\N\setminus\N$ and a diffuse abelian $N\leq M'\cap M^\omega$ by \cite{Dix69}. By taking an element $a\in N$ with diffuse spectrum and replacing $N$ by $W^*(a)$, we may assume $N$ is separable. It follows that $M\leq\mathcal{N}_{M^\omega}(N)''$ and we may conclude as before.  
\end{proof}

We now give some examples where (\ref{maineq}) is satisfied only for certain choices of generators.
\begin{cor}
\begin{enumerate}
    \item Suppose $M$ is non-prime, i.e. $M\cong M_1\overline{\otimes}M_2$ with $M_1,M_2$ diffuse, and fix $N_1\leq M_1,N_2\leq M_2$ diffuse abelian subalgebras. Then, (\ref{maineq}) holds when the generating set $\mathbf{X}$ is chosen from $M_1\overline{\otimes}N_2\cup N_1\overline\otimes{M_2}$.
    \item If $M$ is generated by unitaries $v_1,\dots,v_k$ which have Property $C'$ in the sense of \cite{GaPo15} (see proof for description), then one can find $N$ depending on the $v_i$ such that (\ref{maineq}) holds.
\end{enumerate}
\end{cor}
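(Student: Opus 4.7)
For part (1), the natural candidate is $N := N_1 \overline{\otimes} N_2$, which is diffuse, separable, and abelian. Writing $N_j \cong L^\infty(\mathcal{Y}_j, \nu_j)$, the singularity hypothesis of Theorem 2.1 for each generator $x_i$ becomes mutual singularity of the associated measure $\nu_{x_i}$ on $(\mathcal{Y}_1 \times \mathcal{Y}_2)^2$ with $(\nu_1 \otimes \nu_2)^{\otimes 2}$ (exactly as set up in the proof of Theorem 2.1). For $x_i \in M_1 \overline{\otimes} N_2$: since $N_2$ is abelian, $1 \otimes N_2$ lies in the center of $M_1 \overline{\otimes} N_2$; on an elementary tensor $x_i = a \otimes b$ a direct computation gives
\begin{equation*}
\tau\bigl((f_1 \otimes f_2)\, x_i\, (g_1 \otimes g_2)\, x_i^*\bigr) = \tau_1(f_1 a g_1 a^*) \cdot \tau_2(f_2 g_2 |b|^2),
\end{equation*}
so $\nu_{x_i}$ is supported on $\mathcal{Y}_1^2 \times \Delta_{\mathcal{Y}_2}$. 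This extends to general $x_i \in M_1 \overline{\otimes} N_2$ by $L^2$-density in the elementary-tensor expansion. Diffuseness of $\nu_2$ makes $\Delta_{\mathcal{Y}_2}$ a $(\nu_2 \otimes \nu_2)$-null set, so $\nu_{x_i} \perp (\nu_1 \otimes \nu_2)^{\otimes 2}$. The case $x_i \in N_1 \overline{\otimes} M_2$ is symmetric (with $\nu_{x_i}$ supported on $\Delta_{\mathcal{Y}_1} \times \mathcal{Y}_2^2$). Since each generator contributes a bimodule disjoint from $L^2(N) \otimes L^2(N)$, so does $L^2(N\mathbf{X}N)$, and Theorem 2.1 applies.

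For part (2), I would first recall the relevant form of Gao--Popa's Property $C'$: it postulates, for the unitaries $v_1, \ldots, v_k$, the existence of a diffuse abelian subalgebra $N \subseteq W^*(v_1, \ldots, v_k)$ such that for every $\varepsilon > 0$ one has an element $\alpha_\varepsilon = \sum_n p_n^{(1)} \otimes p_n^{(2)} \in N \odot N^{\mathrm{op}}$ (a sum of projection tensors in $N$) with $\|\alpha_\varepsilon - 1 \otimes 1\|_2 < \varepsilon$ and $\|\alpha_\varepsilon \# v_j\|_2 < \varepsilon$ for every $j$. This is precisely the nets-of-approximants reformulation of the bimodule singularity $L^2(N v_j N) \perp L^2(N) \otimes L^2(N)$ which the proof of Theorem 2.1 exploits. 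Applying Theorem 2.1 to the self-adjoint tuple $\mathbf{X} = (v_j + v_j^*,\, i(v_j - v_j^*))_j$ yields the desired conclusion.

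The main effort in part (1) is the $L^2$-density extension of the support computation from elementary tensors to all of $M_1 \overline{\otimes} N_2$, which is routine but worth a line. In part (2) the only real work is unpacking Property $C'$ from \cite{GaPo15} so that it directly feeds into Theorem 2.1; this is a translation exercise rather than a genuine obstacle.
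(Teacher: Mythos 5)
For part (1), your argument is correct but proceeds by a genuinely different route. The paper's proof is essentially one line: it observes that $M_1 \overline{\otimes} N_2 \subseteq \mathcal{N}_M(\C1 \overline{\otimes} N_2)''$ and $N_1 \overline{\otimes} M_2 \subseteq \mathcal{N}_M(N_1 \overline{\otimes} \C1)''$, so each generator lies in the normalizer algebra of a diffuse abelian piece of $N = N_1 \overline{\otimes} N_2$ and hence gives a singular $N$-bimodule in the sense of \cite{Voi93}, exactly as in the Cartan case; the net argument of Theorem 2.1 is then run generator-by-generator using the appropriate piece of $N$. You instead compute $\nu_{x_i}$ explicitly on elementary tensors, observe it is carried by $\mathcal{Y}_1^2 \times \Delta_{\mathcal{Y}_2}$ (resp.\ $\Delta_{\mathcal{Y}_1} \times \mathcal{Y}_2^2$), and pass to the closure by the $L^2$-continuity (in fact total-variation continuity) of $x \mapsto \nu_x$. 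This is a more explicit, self-contained verification of the same singularity; it has the small advantage of checking the hypothesis of Theorem~2.1 directly with $N$ itself rather than a subalgebra, at the cost of a longer computation.

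For part (2) there is a genuine gap. Your recollection of Property $C'$ is not Gao--Popa's definition: you have essentially asserted, \emph{as the hypothesis}, the conclusion you are trying to reach, namely the existence of projection-tensor nets in $N \odot N^{\mathrm{op}}$ witnessing $L^2(N v_j N) \perp L^2(N) \otimes L^2(N)$, with $N$ inside $M$. Property $C'$ actually states that the $v_j$ have diffuse spectrum and that there exist mutually commuting unitaries $u_1,\dots,u_k \in M^\omega$ (for some free ultrafilter $\omega$), each with diffuse spectrum, such that $[v_j,u_j]=0$ for every $j$; the natural algebra $N := W^*(u_1,\dots,u_k)$ lives in the ultrapower, not in $M$. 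Your proof omits the step of \emph{deriving} the bimodule singularity from this commutation relation, and also ignores the issue that $N$ so produced need not be separable, so Theorem~2.1 cannot be applied verbatim. The paper gets around this by running the net argument generator-by-generator inside the separable subalgebras $W^*(u_j) \subseteq N$: since $v_j$ commutes with $u_j$, it normalizes the diffuse abelian algebra $W^*(u_j)$, hence is singular over it, and the corresponding nets $\beta^{(j)}$ are taken in $W^*(u_j)\odot W^*(u_j)^{\mathrm{op}} \subseteq N\odot N^{\mathrm{op}}$.
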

\begin{proof}
    First, suppose $M$ is non-prime; write $M\cong M_1\overline{\otimes}M_2$, and fix $N_1\leq M_1$, $N_2\leq M_2$ diffuse abelian algebras. Observe that $M_1\overline{\otimes} N_2\leq\mathcal{N}_M(\C1\overline{\otimes}N_2)''$, and similarly $N_1\overline{\otimes}M_2\leq\mathcal{N}_M(N_1\overline{\otimes}\C1)''$. Then $N=N_1\overline{\otimes}N_2$ works.

    Next, suppose $M=W^*(v_1,\dots,v_k)$ for unitaries $v_1,\dots,v_k$ with Property $C'$. This means that the $v_i$ have diffuse spectrum and there are mutually commuting unitaries $u_1,\dots,u_k\in M^\omega$ for some free ultrafilter $\omega$, also with diffuse spectrum, such that $[v_j,u_j]=0$ for $1\leq j\leq k$. Let $N=W^*(u_1,\dots,u_k)$, which is abelian. 
    
    Note, though, that $N$ may not be separable. However, each $W^*(u_j)$ is, and so we may repeat the argument of Theorem 2.1 to find, for each $1\leq j\leq k$, a net $\{\beta^{(j)}\}_{i\in I}$ in $W^*(u_j)\odot W^*(u_j)^{\text{op}}\subseteq N\odot N^{\text{op}}$ so that $\beta_i^{(j)}\# s_j\to s_j$ and $\beta_i^{(j)}\#x_j\to0$. 
\end{proof}
\subsection{Non-microstates free entropy}

Let $N\subseteq M$ be a unital $*$-subalgebra, $\mathbf{S}=(s_1,\dots,s_d)$ a free $(0,1)$-semicircular family, free from $N\cup\mathbf{X}$; denote by $E_\epsilon:M\to W^*(N,\mathbf{X+\sqrt{\epsilon}S})$ the trace-preserving conditional expectation. 

The \textit{free Fisher information relative to N}, $\Phi^*((\cdot):N)$, is defined in terms of the conjugate variables $\mathcal{J}(x_i:N)$. We will not need the precise definition of either quantity. Instead, we record that Corollary 3.9 in \cite{Voi98-1} shows $\mathcal{J}(x_i+\sqrt{\epsilon}s_i:N)=\frac{1}{\sqrt{\epsilon}}E_\epsilon(s_i)$, and so by the definition (see Definition 6.1 in \cite{Voi98-1}) of the free Fisher information, we have $$\Phi^*(\mathbf{X+\sqrt{\epsilon}S}: N)=\sum_{i=1}^N\norm{\mathcal{J}(x_i+\sqrt{\epsilon}s_i:N)}_2^2=\frac{1}{\epsilon}\sum_{1\leq i\leq d}\norm{E_\epsilon(s_i)}_2^2.$$ 

We then define the \textit{non-microstates free entropy of} \textbf{X} \textit{relative to $N$} by $$\chi^*(\mathbf{X}:N):=\frac{1}{2}\int_0^\infty\left[\frac{d}{1+t}-\Phi^*(\mathbf{X}+\sqrt{t}\mathbf{S}:N)\right]\,dt+\frac{d}{2}\log(2\pi e).$$

Using the Kaplansky density theorem, these formulae hold with $N$ replaced by $W^*(N)$ (\cite{Voi98-1}). We will thus without comment take $N$ to be a von Neumann subalgebra of $M$.

Now we record the implication (\ref{maineq}) has for non-microstates free entropy.
\begin{cor}
    If $\mathbf{X}=(x_1,\dots,x_d)$ and $N$ satisfy (\ref{maineq}), and $\mathbf{S}$ is a free semicircular family, free from $\mathbf{X}\cup N$, then $$\Phi^*(\mathbf{X+\sqrt{\epsilon}S}:N)=\frac{d}{\epsilon},\text{ and }\chi^*(\mathbf{X+\sqrt{\epsilon}S}:N)=\frac{d}{2}[\log(2\pi e)+\log\epsilon]$$ for all $\epsilon>0$. 
\end{cor}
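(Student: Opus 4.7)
The plan is to feed the hypothesis (\ref{maineq}) directly into the formulas recorded in this subsection; the proof is essentially bookkeeping on top of the definitions.

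For the first identity, I would observe that by (\ref{maineq}) every $s_i$ lies in $M_\epsilon$, so the conditional expectation $E_\epsilon:M\to M_\epsilon$ fixes each $s_i$. Since each $s_i$ is standard semicircular with $\norm{s_i}_2^2=1$, the explicit formula
\[
\Phi^*(\mathbf{X}+\sqrt{\epsilon}\mathbf{S}:N)=\frac{1}{\epsilon}\sum_{i=1}^d\norm{E_\epsilon(s_i)}_2^2
\]
evaluates to $d/\epsilon$. Equivalently, this is precisely the equality case of (\ref{eq1}).

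For the second identity, I would substitute into the defining integral
\[
\chi^*(\mathbf{X}+\sqrt{\epsilon}\mathbf{S}:N)=\frac{1}{2}\int_0^\infty\left[\frac{d}{1+t}-\Phi^*\bigl(\mathbf{X}+\sqrt{\epsilon}\mathbf{S}+\sqrt{t}\mathbf{S}':N\bigr)\right]dt+\frac{d}{2}\log(2\pi e),
\]
where $\mathbf{S}'$ is a fresh free $(0,1)$-semicircular family, free from $N\cup\mathbf{X}\cup\mathbf{S}$. The only free-probabilistic input is the semigroup property of free semicircular convolution: since $\mathbf{S},\mathbf{S}'$ are free from $N\cup\mathbf{X}$ and from each other, the tuple $\sqrt{\epsilon}\mathbf{S}+\sqrt{t}\mathbf{S}'$ has the same joint $*$-distribution together with $N\cup\mathbf{X}$ as $\sqrt{\epsilon+t}\,\mathbf{S}''$ for a single semicircular family $\mathbf{S}''$ free from $N\cup\mathbf{X}$. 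Because $\Phi^*(\,\cdot\,:N)$ depends only on this joint distribution, and (\ref{maineq}) holds by hypothesis at every positive parameter, the first identity applied at parameter $\epsilon+t$ gives $\Phi^*(\mathbf{X}+\sqrt{\epsilon}\mathbf{S}+\sqrt{t}\mathbf{S}':N)=d/(\epsilon+t)$.

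The integrand thus simplifies to $\frac{d}{1+t}-\frac{d}{\epsilon+t}$, and direct computation (the logarithmic divergences cancel at infinity, leaving $\log\tfrac{1+t}{\epsilon+t}\big|_0^\infty=\log\epsilon$) produces $\frac{d}{2}\log\epsilon$; adding the boundary term $\frac{d}{2}\log(2\pi e)$ yields the advertised formula. There is no real obstacle: once (\ref{maineq}) is in hand, the corollary is a short calculation, and the only point worth pausing on is the free-convolution reduction above, which is standard.
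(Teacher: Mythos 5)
Your proof is correct and follows essentially the same route as the paper: both use the explicit formula $\Phi^*(\mathbf{X}+\sqrt{\epsilon}\mathbf{S}:N)=\frac{1}{\epsilon}\sum_i\norm{E_\epsilon(s_i)}_2^2$ together with \eqref{maineq} for the first identity, and both exploit the free-convolution/semigroup identity in distribution $\sqrt{\epsilon}\mathbf{S}+\sqrt{t}\mathbf{S}'\sim\sqrt{\epsilon+t}\,\mathbf{S}''$ to reduce the integrand to $\frac{d}{1+t}-\frac{d}{\epsilon+t}$. The only cosmetic difference is that the paper phrases the semigroup step via auxiliary families $\mathbf{S},\mathbf{S}',\mathbf{S}''$ and records the intermediate equality $\Phi^*(\mathbf{X}+\sqrt{\delta_1+\delta_2}\mathbf{S}:N)=\Phi^*(\mathbf{X}+\sqrt{\delta_1}\mathbf{S}'+\sqrt{\delta_2}\mathbf{S}'':N)$, which is exactly the ``free-convolution reduction'' you highlight.
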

\begin{proof}
    The first assertion follows from the formula given for the free Fisher information above and the fact that, in our case, $E_\epsilon=\text{id}\rvert_{W^*(N,\mathbf{X},\mathbf{S})}$ for all $\epsilon>0$. 
    
    For the statement about $\chi^*$, let $\mathbf{S,S',S''}$ be free semicircular families, free from each other as well as $\mathbf{X}\cup N$. Observe that $(\sqrt{\delta_1+\delta_2})s_j$ and $\sqrt{\delta_1}s_j'+\sqrt{\delta_2}s_j'$ have the same $*$-distribution; in particular, the latter are semicircular variables, free from $\mathbf{X}\cup N$. Then the above arguments easily show $$\Phi^*(\mathbf{X+(\sqrt{\delta_1+\delta_2})S}:N)=\Phi^*(\mathbf{X+\sqrt{\delta_1}S'+\sqrt{\delta_2}S''}:N),$$ and hence $$\chi^*(\mathbf{X+\sqrt{\epsilon}S}:N)=\frac{1}{2}\int_0^\infty\left[\frac{d}{1+t}-\frac{d}{\epsilon+t}\right]\,dt + \frac{d}{2}\log(2\pi e)=\frac{d}{2}\log(2\pi e\epsilon).$$ 
\end{proof}

\subsection{Conditional Microstates Free Entropy}
We briefly review (conditional) microstates free entropy. For microstates free entropy, we refer to \cite{Voi93,Voi94,Voi96}, while for the conditional version, to \cite{Shl02,JP24}. In this paper, we use the notation of \cite{JP24}. 

For $m,k\in\N,\gamma>0$, Voiculescu's microstate spaces for $\mathbf{X}$ are defined as \begin{align*}
    \Gamma_R(\mathbf{X}; m,\gamma,&k):=\{(X_1^{(k)},\dots,X_d^{(k)})\in (M_k(\C)_{\text{s.a.}})^d: \norm{X^{(k)}_j}\leq R\,\,\forall\,1\leq j\leq d,\text{ and } \\
    &\abs{\text{tr}_k(X_{i_1}X_{i_2}\cdots X_{i_p})-\tau(x_{i_1}x_{i_2}\cdots x_{i_p})}<\gamma\,\,\forall\,(i_1,\dots,i_m)\in [d]^p,\,\, 1\leq p\leq m \}.
\end{align*}

We identify isometrically $(M_k(\C))^d_{\text{s.a.}}$, equipped with the Hilbert-Schmidt metric, with $\R^{dk^2}$, equipped with the Euclidean metric, so that we can take the Lebesgue volume of the former space.

\begin{defn}[\cite{Voi93,Voi94}]
The \textit{microstates free entropy} of \textbf{X} is $$\chi(\mathbf{X})=\sup_{R>0}\inf_{m,\gamma}\limsup_{k\to\infty}\left[\frac{1}{k^2}\log\text{vol}_{dk^2}(\Gamma_R(\mathbf{X};m,\gamma,k)) + \frac{1}{2}d\log k\right].$$    
\end{defn}

In fact, the supremum over $R$ is superfluous, and one may evaluate the entropy for any fixed choice of $R$ which exceeds $\max_{1\leq j\leq d}\{\norm{x_j}_{\text{op}}\}$.

We now record the definition of conditional microstates free entropy as found in \cite{Shl02} (which is different from Voiculescu's original definition in \cite{Voi94}). 

Let $\mathbf{Y}=(y_1,\dots,y_r)$ be a self-adjoint $r$-tuple in $M$ with $\max_{1\leq j\leq r}\{\norm{y_j}_{\text{op}}\}<S$. We say that $\mathbf{Y}^{(k)}=(Y_1^{(k)},\dots,Y_r^{(k)})_{k\geq1}$ is a \textit{microstate sequence for Y} if $\mathbf{Y}^{(k)}\in (M_k(\C)_\text{s.a.})^r$ and, for each $n\in\N,\delta>0$, there exists an $l\in\N$ so that $\mathbf{Y}^{(l)}\in\Gamma_S(\mathbf{Y};n,l,\delta)$, and write $\mathbf{Y}^{(k)}\rightsquigarrow\mathbf{Y}$.

With $\mathbf{X,Y}$ as above, assume without loss of generality that $R>S$. Fix a microstate sequence $(\mathbf{Y}^{(k)})_{k\geq1}$ for $\mathbf{Y}$ and define \begin{align*}
    \Gamma_R(&\mathbf{X}\mid \mathbf{Y}^{(k)}\rightsquigarrow \mathbf{Y};m,k,\gamma):= \\&\{(X_1^{(k)},\dots,X_d^{(k)})\in(M_k(\C)_{\text{s.a.}})^d: (X_1^{(k)},\dots,X_d^{(k)},\mathbf{Y}^{(k)})\in \Gamma_R(\mathbf{X,Y};m,k,\gamma)\}.
\end{align*} 

Informally, these are those microstates $(X_1^{(k)},\dots,X_d^{(k)})$ for which $(X_1^{(k)},\dots,X_d^{(k)},\mathbf{Y}^{(k)})$ is a microstate for $\{\mathbf{X,Y}\}$. Accordingly, we make the following:

\begin{defn}[\cite{Shl02,JP24}]
    The \textit{microstates free entropy of} $\mathbf{X}$ \textit{relative to} $\mathbf{Y^{(k)}\rightsquigarrow Y}$ is 
    $$\chi(\mathbf{X}\mid\mathbf{ Y^{(k)}\rightsquigarrow Y})=\sup_{R>0}\inf_{m,\gamma}\limsup_{k\to\infty}\left[\frac{1}{k^2}\log\text{vol}_{dk^2}(\Gamma_R(\mathbf{X\mid Y^{(k)}\rightsquigarrow Y};m,\gamma,k) + \frac{1}{2}d\log k\right].$$
\end{defn}

Again, it is not necessary to take the supremum over the cutoff constant $R$.

We need to relate non-microstates and microstates free entropies. It was proven in \cite{BCG03} that $\chi\leq\chi^*$, and a conditional variant was obtained in \cite{JP24}. We will use the latter result, the statement of which we have simplified to suit our purposes.
\begin{thm}[\cite{BCG03,JP24}]
    Given finite self-adjoint tuples $\mathbf{X,Y}$ in a tracial von Neumann algebra $M$, assume that there is a microstate sequence $\mathbf{Y}^{(k)}\rightsquigarrow\mathbf{Y}$. Then $$\chi(\mathbf{X}\mid\mathbf{Y^{(k)}\rightsquigarrow Y})\leq \chi^*(\mathbf{X}:W^*(\mathbf{Y})).$$ 
\end{thm}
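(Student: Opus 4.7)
The plan is to extend the Biane--Capitaine--Guionnet proof of $\chi(\mathbf{X})\le\chi^*(\mathbf{X})$ to the conditional setting, along the lines of \cite{JP24}. The inequality follows from three ingredients combined by a de Bruijn--type integration.

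First, a de Bruijn identity for the non-microstates side: from the integral formula for $\chi^*(\,\cdot\,:W^*(\mathbf{Y}))$ recorded above and the bound $\Phi^*(\mathbf{X}+\sqrt{t}\mathbf{S}:W^*(\mathbf{Y}))\le d/t$, the change of variables $t\mapsto T+t$ is absolutely convergent and yields
$$\chi^*(\mathbf{X}+\sqrt{T}\mathbf{S}:W^*(\mathbf{Y})) - \chi^*(\mathbf{X}:W^*(\mathbf{Y})) = \tfrac{1}{2}\int_0^T \Phi^*(\mathbf{X}+\sqrt{s}\mathbf{S}:W^*(\mathbf{Y}))\,ds.$$

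Second, and this is the technical heart, a matching lower bound for the conditional microstates entropy:
$$\chi(\mathbf{X}+\sqrt{T}\mathbf{S}\mid\mathbf{Y}^{(k)}\rightsquigarrow\mathbf{Y}) - \chi(\mathbf{X}\mid\mathbf{Y}^{(k)}\rightsquigarrow\mathbf{Y}) \ge \tfrac{1}{2}\int_0^T \Phi^*(\mathbf{X}+\sqrt{s}\mathbf{S}:W^*(\mathbf{Y}))\,ds.$$
I would view $\Gamma_R(\mathbf{X}\mid\mathbf{Y}^{(k)}\rightsquigarrow\mathbf{Y};m,\gamma,k)\subset\R^{dk^2}$ and convolve its normalized indicator with the Gaussian density of an independent GUE tuple $\mathbf{S}^{(k)}$, scaled so that $\sqrt{T}\mathbf{S}^{(k)}\rightsquigarrow\sqrt{T}\mathbf{S}$ in moments and jointly freely with $(\mathbf{X}^{(k)},\mathbf{Y}^{(k)})$. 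The classical de Bruijn identity on $\R^{dk^2}$ expresses the $T$-derivative of the differential entropy of this convolved density (which approximates $\chi$ of the microstate space for $\mathbf{X}+\sqrt{T}\mathbf{S}$) as one half the classical Fisher information. Dividing by $k^2$ and taking $\liminf$ as $k\to\infty$, $\gamma\to 0$, $m\to\infty$ identifies this with a matrix-level relative free Fisher information. Because the condition $\mathbf{Y}^{(k)}\rightsquigarrow\mathbf{Y}$ does not participate in the convolution, asymptotic freeness of $\mathbf{S}^{(k)}$ from $(\mathbf{X}^{(k)},\mathbf{Y}^{(k)})$ forces the matrix conjugate variables to converge (in $\liminf$ of $L^2$-norm squared) to $\sum_i\norm{\mathcal{J}(x_i+\sqrt{s}s_i:W^*(\mathbf{Y}))}_2^2 = \Phi^*(\mathbf{X}+\sqrt{s}\mathbf{S}:W^*(\mathbf{Y}))$. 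Fubini in $s$ then integrates the pointwise bound.

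Third, an asymptotic matching at $T=\infty$: as $T\to\infty$, $\sqrt{T}\mathbf{S}$ dominates $\mathbf{X}$ and, by asymptotic freeness of GUE from $\mathbf{Y}^{(k)}$, both entropies asymptote to that of a rescaled free semicircular family free from $\mathbf{Y}$, which is $\tfrac{d}{2}\log(2\pi e T)+o(1)$ (in agreement with Corollary 2.4 on the non-microstates side). Their difference therefore tends to $0$. Combining the three steps and letting $T\to\infty$ yields $\chi(\mathbf{X}\mid\mathbf{Y}^{(k)}\rightsquigarrow\mathbf{Y}) \le \chi^*(\mathbf{X}:W^*(\mathbf{Y}))$.

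The main obstacle is the second step: one must check that the classical-to-free identification of the Fisher information limit respects the external constraint $\mathbf{Y}^{(k)}$, so that the resulting bound is against $\Phi^*(\,\cdot\,:W^*(\mathbf{Y}))$ rather than the strictly weaker unconditional $\Phi^*$. The argument goes through because the constraint sits on the base space of the Gaussian convolution and leaves the $\mathbf{X}$-differential structure untouched, while the conditioning on $W^*(\mathbf{Y})$ is exactly what asymptotic freeness of $\mathbf{S}^{(k)}$ from $\mathbf{Y}^{(k)}$ delivers at the matrix level; spelling this out rigorously is where the bulk of the effort in \cite{BCG03,JP24} lies.
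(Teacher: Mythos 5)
The paper does not actually prove this statement: it is stated as a cited result from \cite{BCG03} (unconditional case) and \cite{JP24} (conditional case), and the authors explicitly say only that they have ``simplified'' the statement to suit their purposes. So there is no in-paper proof to compare your attempt against; what you are doing is reconstructing the strategy of the cited references. With that caveat, your three-step skeleton (de Bruijn identity for $\chi^*$, a matching de Bruijn lower bound for the convolved microstate density, and matching asymptotics as $T\to\infty$) is indeed the right shape of the Biane--Capitaine--Guionnet argument and of its conditional extension.

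A few places where the sketch is imprecise in ways that matter. First, in step 2 you say the $k^2$-normalized classical Fisher information of the convolved density ``identifies with'' the relative free Fisher information in the limit; the correct and provable statement is a one-sided Cram\'er--Rao/Stam-type \emph{lower} bound: the matrix lift of the free conjugate variable $\mathcal{J}(x_i+\sqrt{s}s_i:W^*(\mathbf{Y}))$ is a (generally suboptimal) score estimator, so the classical Fisher information is bounded below by its normalized $L^2$-norm, whose $\liminf$ is $\ge\Phi^*(\mathbf{X}+\sqrt{s}\mathbf{S}:W^*(\mathbf{Y}))$. Equality is neither needed nor generally available. Second, you need to justify that the differential entropy of the convolution of the uniform measure on $\Gamma_R(\mathbf{X}\mid\mathbf{Y}^{(k)}\rightsquigarrow\mathbf{Y};m,\gamma,k)$ with the scaled GUE law is, up to a negligible error, bounded above by $\log\mathrm{vol}$ of a suitable microstate set for $(\mathbf{X}+\sqrt{T}\mathbf{S},\mathbf{Y})$ relative to $\mathbf{Y}^{(k)}$; since the Gaussian has full support, this is a concentration statement, not a support statement, and it is where most of the effort in \cite{BCG03,JP24} goes. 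Third, you silently trade between $\limsup$ (in the definition of $\chi$) and the $\liminf$ that naturally comes out of Fatou in the Fisher-information integral; reconciling these is a genuine technical point and you should flag it explicitly. None of these is a fatal flaw in the plan, but as written your sketch overstates what comes for free; spelled out carefully it should recover the cited theorem, which is all the paper needs.
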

\subsection{Strong 1-boundedness and 1-bounded entropy}

Strong 1-boundedness was defined by Jung in \cite{Jun07}. We remark that Hadwin and Shen contemporaneously defined the closely related notion of free orbit dimension in \cite{HS07}. We employ the notation and terminology of \cite{Hay18}, which made explicit the entropic quantity implicit in \cite{Jun07}.

Given a set $S\subseteq\R^n$, denote by $\mathcal{K}_\epsilon(S)$ the minimal number of $\epsilon$-balls (with respect to the Euclidean metric) needed to cover $S$. Define

$$\mathbb{K}_\epsilon(\mathbf{X}):=\sup_{R>0}\inf_{m,\gamma}\limsup_{k\to\infty}\left[\frac{1}{k^2}\log\mathcal{K}_\epsilon(\Gamma_R(\mathbf{X};m,\gamma,k))\right],$$ making the analogous definition for $\mathbb{K}_\epsilon(\mathbf{X \mid Y^{(k)}\rightsquigarrow Y})$.

Suppose that there is $Y\in M_{\text{s.a.}}$ with diffuse spectrum and $\chi(Y)>-\infty$ so that $\mathbf{X}\cup\{Y\}$ is a finite generating set for $(M,\tau)$. In particular, there is a microstates sequence $\mathbf{Y}^{(k)}\to Y$. Jung showed in \cite{Jun07} that if $\mathbf{X}'$ is any other generating set of self-adjoints for $M$, then $$\sup_{\epsilon>0}\mathbb{K}_\epsilon(\mathbf{X}\cup\{Y\} \mid \mathbf{Y^{(k)}}\rightsquigarrow Y)=\sup_{\epsilon>0}\mathbb{K}_\epsilon(\mathbf{X' \mid Y^{(k)}}\rightsquigarrow Y).$$ This motivates the following
\begin{defn}[\cite{Hay18}, see also \cite{Jun07}]
    The \textit{1-bounded entropy} of $(M,\tau)$ is $$h(M):=\sup_{\epsilon>0}\mathbb{K}_\epsilon(\mathbf{X \mid Y^{(k)}}\rightsquigarrow Y)$$ for $\mathbf{X},Y$ as in the paragraph above.
\end{defn}
We will use the following property of 1-bounded entropy (see \cite{Hay18}): if $N_1\leq N_2\leq M_2\leq M_1$ is diffuse, then $h(N_1:M_1)\leq h(N_2:M_2)$.

For any free ultrafilter $\omega\in\beta\N\setminus\N$, we have $h(M)=h(M:M^\omega)$.

A diffuse, tracial von Neumann algebra $(M,\tau)$ is \textbf{strongly 1-bounded} if and only if $h(M)<\infty$. We refer the reader to \cite{Hay18,HJKE25,HJNS21} for further details and applications. If $\mathbf{X}$ generates a diffuse von Neumann algebra with $\delta_0(\mathbf{X})>1$, then $h(W^*(\mathbf{X}))=\infty$ (\cite{Hay18},\cite{Voi96}).

We now state the theorem linking Theorems 2.1 and 2.2 to strong 1-boundedness. In the examples above, when $N$ was diffuse and abelian, we took it to be separable in order to identify it with a standard probability space. In this section, we take $N$ to be separable as well; in the abelian case, this is equivalent to $N$ being singly generated, and is used throughout the proofs and definitions involving 1-bounded entropy.

\begin{lem}
    If $\mathbf{X}$ and $N=W^*(\mathbf{Y})$ satisfy (\ref{maineq}) with $N$ diffuse and hyperfinite, then $h(W^*(\mathbf{X}))<\infty$.
\end{lem}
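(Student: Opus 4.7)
The strategy is to combine Corollary 2.4 with Theorem 2.7 to obtain an upper bound of $\tfrac{d}{2}\log(2\pi e\epsilon)$ on the conditional microstates free entropy of $\mathbf{X}+\sqrt{\epsilon}\mathbf{S}$, and then apply Jung's packing argument to convert this into a uniform bound on the covering numbers $\mathbb{K}_\delta$ underlying $h$.

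Since $N$ is separable, diffuse, and hyperfinite, first choose a single self-adjoint generator $Y\in N$ with $\chi(Y)>-\infty$ (for instance, a semicircular element) and a microstate sequence $\mathbf{Y}^{(k)}\rightsquigarrow Y$ in $M_k(\C)_{\text{s.a.}}$ (supplied by normalized GUE for semicircular $Y$). Corollary 2.4 gives $\chi^*(\mathbf{X}+\sqrt{\epsilon}\mathbf{S}:N) = \tfrac{d}{2}\log(2\pi e\epsilon)$, and then Theorem 2.7 applied to the tuple $\mathbf{X}+\sqrt{\epsilon}\mathbf{S}$ conditioned on $\mathbf{Y}^{(k)}\rightsquigarrow Y$ yields
$$\chi(\mathbf{X}+\sqrt{\epsilon}\mathbf{S}\mid\mathbf{Y}^{(k)}\rightsquigarrow Y)\leq\tfrac{d}{2}\log(2\pi e\epsilon)\qquad\text{for every }\epsilon>0.$$

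Next, invoke Jung's packing argument (\cite{Jun07}, see also \cite{Hay18}) in the conditional setting: for fixed $\delta>0$, pick $\epsilon>0$ small enough that $2\sqrt{\epsilon}\cdot\|\mathbf{S}\|_\infty<\delta$. For a maximal $\delta$-separated subset $\{Z_1,\dots,Z_P\}$ of $\Gamma_R(\mathbf{X}\mid\mathbf{Y}^{(k)};m,\gamma,k)$, the translated sets $Z_i+\sqrt{\epsilon}\cdot\Gamma(\mathbf{S}\mid\mathbf{Y}^{(k)};m',\gamma',k)$ are pairwise disjoint, lie inside an enlarged conditional microstate space for $\mathbf{X}+\sqrt{\epsilon}\mathbf{S}$, and each has volume $\epsilon^{dk^2/2}$ times that of $\Gamma(\mathbf{S}\mid\mathbf{Y}^{(k)})$. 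Comparing volumes, dividing by $k^2$, and taking $\limsup_{k\to\infty}$ (the $\tfrac{d}{2}\log k$ normalization cancels on both sides) produces
$$\mathbb{K}_\delta(\mathbf{X}\mid\mathbf{Y}^{(k)}\rightsquigarrow Y)\leq\chi(\mathbf{X}+\sqrt{\epsilon}\mathbf{S}\mid\mathbf{Y}^{(k)}\rightsquigarrow Y)-\chi(\mathbf{S})-\tfrac{d}{2}\log\epsilon.$$
Since $\chi(\mathbf{S})=\tfrac{d}{2}\log(2\pi e)$, substituting the bound from the previous step makes the right-hand side $\leq 0$, uniformly in $\delta$. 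Taking $\sup_{\delta>0}$ gives $h(W^*(\mathbf{X},Y):M^\omega)\leq 0$, and the inclusions $W^*(\mathbf{X})\subseteq W^*(\mathbf{X},Y)\subseteq M^\omega$ together with monotonicity of $h$ and the identity $h(W^*(\mathbf{X}))=h(W^*(\mathbf{X}):M^\omega)$ give $h(W^*(\mathbf{X}))<\infty$.

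The main technical obstacle is the packing step: one must check that a $\sqrt{\epsilon}$-scaled semicircular-type perturbation of a conditional microstate for $\mathbf{X}$ remains a conditional microstate for $\mathbf{X}+\sqrt{\epsilon}\mathbf{S}$ relative to the same $\mathbf{Y}^{(k)}$, via moment-matching and asymptotic freeness of GUE. The decisive cancellation between the $-\tfrac{d}{2}\log\epsilon$ contributed by rescaling the semicircular microstate space and the $+\tfrac{d}{2}\log\epsilon$ in the $\chi^*$ formula is precisely what converts the $\epsilon\to 0$ decay of the conditional free entropy into a uniform covering-number bound. When $N$ sits in $M^\omega$ rather than in $M$, the entire construction is performed inside $M^\omega$ and the ultrapower invariance of $h$ is used at the end.
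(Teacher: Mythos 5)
Your approach is essentially the same as the paper's: combine the $\chi^*$ formula with the conditional version of the Biane--Capitaine--Guionnet inequality to bound the conditional microstates free entropy of $\mathbf{X}+\sqrt{\epsilon}\mathbf{S}$, then run Jung's packing argument to turn the $\epsilon\to0$ decay into a uniform bound $\mathbb{K}_\delta(\mathbf{X}\mid\mathbf{Y}^{(k)}\rightsquigarrow\mathbf{Y})\leq 0$, and finish via monotonicity and ultrapower invariance of $h$. The paper's packing step uses the ``in the presence of $\mathbf{S}$'' microstate space $\Gamma_R(\mathbf{X}+\sqrt{\epsilon}\mathbf{S}\mid\mathbf{Y}^{(k)}\rightsquigarrow\mathbf{Y}:\mathbf{S};m,k,\gamma)$ and cites Jung's Lemma~2.2 from \cite{Jun03}; this is exactly the formalization of the asymptotic-freeness-of-GUE point you flag as the main technical obstacle, so that part is a presentational difference, not a divergent route.

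There is, however, one genuine gap. You begin by choosing a \emph{single self-adjoint} generator $Y\in N$ with $\chi(Y)>-\infty$, and the rest of the argument treats $N=W^*(Y)$. But the lemma allows $N$ to be any diffuse hyperfinite algebra, which need not be abelian; if $N\cong\mathcal{R}$ (say), no single self-adjoint can generate it, since $W^*(Y)$ is always abelian. Taking $Y$ to generate only a proper abelian $A\subsetneq N$ does not repair this: Theorem 2.7 then yields a bound by $\chi^*(\mathbf{X}+\sqrt{\epsilon}\mathbf{S}:A)$, but the $\chi^*$ formula you need is for $N$, and relativizing to the smaller algebra $A$ shifts the inequality in the wrong direction (the relative free entropy with respect to a smaller algebra is at least as large). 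The paper handles this by letting $\mathbf{Y}$ be a generating \emph{tuple} for $N$, running the argument relative to $\mathbf{Y}^{(k)}\rightsquigarrow\mathbf{Y}$, and then appealing to lemmata A.4 and A.5 of \cite{Hay18} --- really the uniqueness up to unitary conjugacy of embeddings of hyperfinite algebras into $\mathcal{R}^\omega$ \cite{JungHyperfinite} --- to show that the covering bound relative to $\mathbf{Y}^{(k)}$ still computes $h(W^*(\mathbf{X},\mathbf{Y}))$ despite the single-self-adjoint normalization in Definition 2.6. Your argument works verbatim when $N$ is abelian, which covers all the concrete corollaries in the paper, but the lemma as stated needs this extra step for the non-abelian hyperfinite case.
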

\begin{proof}
    Fix a microstates sequence $\mathbf{Y}^{(k)}\rightsquigarrow\mathbf{Y}$. The sets $\Gamma_R(\mathbf{X+\sqrt{\epsilon}S:S};m,k,\gamma)$ will denote microstate spaces for $\mathbf{X+\sqrt{\epsilon}S}$ in the presence of $\mathbf{S}$; see \cite{Voi96}. Now define 
    \begin{align*}    \Gamma_R(\mathbf{X+\sqrt{\epsilon}S}\mid \mathbf{Y^{(k)}\rightsquigarrow Y}:\mathbf{S}&;m,k,\gamma):= \\
    &\Gamma_R(\mathbf{X+\sqrt{\epsilon}S\mid \mathbf{Y^{(k)}}\rightsquigarrow Y};m,k,\gamma)\cap\Gamma_R(\mathbf{X+\sqrt{\epsilon}S:S};m,k,\gamma),
    \end{align*}
    and the auxiliary quantity $\chi(\mathbf{X+\sqrt{\epsilon}S\mid\mathbf{Y^{(k)}}\rightsquigarrow Y}:\mathbf{S})$ the same way as all other forms of microstates entropy discussed at the beginning of this section. Clearly, \begin{align*}
        \limsup_{n\to\infty}\frac{1}{n^2}\log\text{vol}&\left( \Gamma_R(\mathbf{X+\sqrt{\epsilon}S\mid Y^{(k)}\rightsquigarrow Y}:\mathbf{S};m,k,\gamma)\right) \\ &\leq \limsup_{n\to\infty}\frac{1}{n^2}\log\text{vol}\left( \Gamma_R(\mathbf{X+\sqrt{\epsilon}S\mid Y^{(k)}\rightsquigarrow Y};m,k,\gamma)\right)
    \end{align*} which, in conjunction with Theorem 2.1 and Corollary 2.3, implies that $$\chi(\mathbf{X+\sqrt{\epsilon}S\mid\mathbf{Y^{(k)}}\rightsquigarrow Y}:\mathbf{S})\leq\chi(\mathbf{X+\sqrt{\epsilon}S\mid\mathbf{Y^{(k)}}\rightsquigarrow Y})$$ $$\leq\chi^*(\mathbf{X+\sqrt{\epsilon}S}:N)\leq\frac{d}{2}[\log(2\pi e)+\log\epsilon].$$
    
    On the other hand, repeating verbatim the (first half of the) proof of Lemma 2.2 in \cite{Jun03} with the set $\Gamma_R(\mathbf{X}\mid \mathbf{Y^{(k)}\rightsquigarrow Y};m,k,\gamma)$ replacing $\Gamma_R(\mathbf{X};m,k,\gamma)$ shows that $$\underline{\chi}(\mathbf{S})+\frac{d}{2}\log\epsilon+\mathbb{K}_{\epsilon'}(\mathbf{X}\mid \mathbf{Y^{(k)}\rightsquigarrow Y})\leq \chi(\mathbf{X}+\sqrt{\epsilon}\mathbf{S}\mid\mathbf{Y^{(k)}\rightsquigarrow Y}:\mathbf{S})\leq \frac{d}{2}\log\epsilon + \frac{d}{2}\log(2\pi e)$$ Here $\epsilon'=\Theta(\sqrt{\epsilon})$, with the proportionality constants depending only on $\mathbf{X,Y}$, while $\underline{\chi}$ is the microstates free entropy with $\liminf_k$ replacing the $\limsup_k$ in its definition. While in general it is not known when $\underline{\chi}=\chi$, it was shown in \cite{Voi98-2} that $\underline{\chi}(\mathbf{S})=\chi(\mathbf{S})$; in our case, they are both equal to $\frac{d}{2}\log(2\pi e)$. It follows that $\mathbb{K}_{\epsilon'}(\mathbf{X\mid Y^{(k)}\rightsquigarrow Y})\leq0$.

    Now, the definition of 1-bounded entropy requires that we fix microstates relative to a single self-adjoint, as opposed to the more general situation of a hyperfinite subalgebra, but the proof of lemmata A.4 and A.5 in \cite{Hay18} (really, the fact that all embeddings of a hyperfinite von Neumann algebra into $\mathcal{R}^\omega$ are unitarily conjugate; see \cite{JungHyperfinite}) shows that $h(W^*(\mathbf{X},\mathbf{Y}))\leq0$.
     Then, we compute $h(W^*(\mathbf{X}))=h(W^*(\mathbf{X}):W^*(\mathbf{X})^\omega)\leq h(W^*(\mathbf{X}):W^*(\mathbf{X,Y}))\leq h(W^*(\mathbf{X,Y}))\leq0.$
\end{proof}

If $\delta_0(\mathbf{X})>1$, $h(W^*(\mathbf{X}))=\infty$. Thus we have in particular the following
\begin{cor}
    There does not exist $N$ as in Theorem 2.1 for any set of generators of $L(\mathbb{F}_r)$ $(r>1)$, or a free product of diffuse, Connes-embeddable von Neumann algebras, so that (\ref{maineq}) would hold.
\end{cor}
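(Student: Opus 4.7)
The plan is to argue by contradiction, combining the preceding lemma with the well-known divergence of the $1$-bounded entropy for the algebras listed.

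Suppose, for contradiction, that for some self-adjoint generating tuple $\mathbf{X}$ of $M = L(\mathbb{F}_r)$ with $r > 1$, or of $M = M_1 \ast M_2$ with both factors diffuse and Connes-embeddable, one could exhibit a diffuse separable abelian $N \subseteq M^\omega$ satisfying the bimodule-orthogonality hypothesis of Theorem 2.1, so that (\ref{maineq}) holds. Since such $N$ is in particular diffuse and hyperfinite, and, being separable and abelian, is singly generated as $N = W^*(\mathbf{Y})$, the preceding lemma applies and yields $h(W^*(\mathbf{X})) = h(M) < \infty$; in fact, inspecting the proof of that lemma gives $h(M) \leq 0$.

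On the other hand, $h$ is a von Neumann algebra invariant, and Section 2.3 recalls that $\delta_0(\mathbf{X}') > 1$ on any generating tuple $\mathbf{X}'$ of $M$ already forces $h(M) = \infty$. For $L(\mathbb{F}_r)$ with $r > 1$, Voiculescu's computation in \cite{Voi96} gives $\delta_0 = r$ on the standard free generators; for a free product $M_1 \ast M_2$ of diffuse Connes-embeddable algebras, the standard free-product formula for $\delta_0$ (or directly the fact cited in the introduction that such free products fail to be strongly $1$-bounded) gives $\delta_0 > 1$ on generating sets inherited from $M_1$ and $M_2$. In either case $h(M) = \infty$, contradicting the finite value extracted from the preceding lemma, and the non-existence follows.

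I do not expect any serious obstacle here: the argument is essentially a one-step deduction from the preceding lemma together with known facts. The only mild subtlety is that the finiteness of $h$ obtained from the lemma is attached to the specific tuple $\mathbf{X}$ appearing with the hypothetical $N$, while $\delta_0 > 1$ is naturally computed on a different generating set, but this is absorbed into the invariance of $h$ under change of generators, which is already used throughout Section 2.3.
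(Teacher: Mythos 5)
Your argument is correct and is exactly the one the paper intends: combine Lemma 2.5 (which forces $h(W^*(\mathbf{X}))\leq 0$ whenever such $N$ exists) with the fact that $\delta_0>1$ on a generating set forces $h=\infty$, applied to free group factors and free products of diffuse Connes-embeddable algebras. No issues.
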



\printbibliography
\end{document}